\documentclass[12pt]{amsart}

\usepackage[margin=2.3cm]{geometry} 
\usepackage[T1]{fontenc}
\usepackage[utf8]{inputenc}
\usepackage{lmodern}
\usepackage{amsmath,amssymb,amsthm,mathtools}
\usepackage{enumitem}
\usepackage[colorlinks=true,linkcolor=blue,citecolor=blue,urlcolor=blue]{hyperref}
\usepackage[nameinlink,capitalise]{cleveref}

\newtheorem{theorem}{Theorem}

\newtheorem{proposition}{Proposition}
\newtheorem{definition}{Definition}

\DeclareMathOperator{\Coin}{Coin}

\DeclareMathOperator{\N}{\mathbb{N}}
\DeclareMathOperator{\Z}{\mathbb{Z}}
\DeclareMathOperator{\R}{\mathbb{R}}

\title[A Counterexample to Horn's Conjecture]{A Counterexample to Horn's Coincidence Conjecture}

\author{Faruk Temur}
\date{}

\subjclass[2020]{Primary 47H10; Secondary 54H25}
\keywords{Horn's conjecture, coincidence point, commuting maps, compact convex set, finite tree, absolute retract, Hilbert space}

\begin{document}

\begin{abstract} Horn conjectured that whenever $K$ is a nonempty compact convex subset of a Banach space and $F,G:K\to K$ are continuous commuting maps, there exists $x\in K$ such that $F(x)=G(x)$. We give a counterexample in a separable Hilbert space. The construction uses an exactly commuting ladder of coincidence-free maps between finite trees, due to Oversteegen and Rogers. 
\end{abstract}

\maketitle

\section{Introduction}
In 1970, Horn \cite{Horn1970} posed the following conjecture in connection with asymptotic fixed point theory.

\vspace{2mm}

{\bf Horn's Conjecture} \emph{
Let $K$ be a nonempty compact convex subset of a Banach space, and let $F,G:K\to K$ be continuous maps satisfying $F\circ G=G\circ F$. Then there exists $x\in K$ such that $F(x)=G(x).$ }

\vspace{2mm}

The statement continued to be recorded as Horn's conjecture in later  literature; see, for example, \cite{ Rus2018, Sine}. Horn \cite{Horn1970} proves this for subsets of $\R$.  The obvious next target is to prove it in the plane, and attempts here concentrated   on the simplest non-trivial cases such as the disk and the simple triod, see \cite{Hagopian,McDowell2009}. However even these  turned out to be  rather difficult. It has been known, see  \cite{Sine}, that if true, Horn's conjecture implies the following result of R. Cauty \cite{Cauty} in  asymptotic fixed point theory. Let $E$ be a Banach space, and $A \subset E$ a closed, bounded, and convex subset. If
 $f : A \rightarrow A$ is  continuous such that  $f^m$ is compact for some $m\in \N$,  then $f$
has at least one fixed point.
 For a long time this was a conjecture attributed to Schauder, until it was proved in greater generality by R. Cauty, see \cite{Cauty,Nussbaum,Sine,Steinlein}.

In this note is we   give a negative answer to Horn's conjecture in infinite dimensions, specifically in a separable Hilbert space. The starting  point of the counterexample is from the work of Oversteegen and Rogers \cite{OversteegenRogers1980,OversteegenRogers1982}.  They construct,  for each nonnegative integer $n$,  finite trees $T_n$ and maps
$
 f_n,g_n:T_{n+1}\rightarrow T_n,
$
satisfying the exact ladder identities
$
 f_n\circ g_{n+1}=g_n\circ f_{n+1}, 
$
with the coincidence set of 
$
 f_0,g_0
$
being empty.
Their original use of this system is to induce a fixed-point-free map on a tree-like inverse limit.
Our  observation made here is that the same exact ladder can be converted into two commuting self-maps of a compact convex set that do not coincide. See also \cite{HernandezHoehn2018} for a clear exposition and improvements upon \cite{OversteegenRogers1980,OversteegenRogers1982}.
Our main result is the following.  The notation  $\Z_+$ stands for nonnegative integers.

\begin{theorem}\label{thm:main}
There exist a nonempty compact convex subset
$
 K\subset \ell^2(\Z_+;\R^2)
$
and continuous maps $F,G:K\to K$ such that
$
 F\circ G=G\circ F,
$
but
$
 F(x)\neq G(x)
 $  for every   $x\in K.$
Consequently, Horn's conjecture is false.
\end{theorem}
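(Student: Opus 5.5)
Using the Oversteegen--Rogers ladder, I would embed each tree $T_n$ in $\R^2$ and define $K$ via the inverse system, but must beware that the inverse limit is not convex. The plan is to take $K=\prod_n \widetilde T_n$, where $\widetilde T_n\subset\R^2$ is the convex hull of a planar embedding of $T_n$, scaled by weights $c_n>0$ with $\sum c_n^2<\infty$. Then $K$ is a compact convex subset of $\ell^2(\Z_+;\R^2)$, being a product of compact convex sets with summable diameters squared. Since each $T_n$ is a finite tree, hence an absolute retract, we fix retractions $r_n:\widetilde T_n\to T_n$. The maps would be shifts along the ladder:
\[
F(x)_n=f_n\bigl(r_{n+1}(x_{n+1})\bigr),\qquad G(x)_n=g_n\bigl(r_{n+1}(x_{n+1})\bigr),
\]
suitably rescaled so that each coordinate lands in the $n$-th factor. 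Continuity holds coordinatewise. The product topology agrees with the norm topology on $K$ because the tails are uniformly small, so $F$ and $G$ are continuous maps $K\to K$.

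To check commutation, compute coordinatewise. We have $G(x)_{n+1}=g_{n+1}(r_{n+2}x_{n+2})\in T_{n+1}$, so $r_{n+1}$ acts as the identity on it, and
\[
F(G(x))_n=f_n\bigl(g_{n+1}(r_{n+2}x_{n+2})\bigr).
\]
Symmetrically,
\[
G(F(x))_n=g_n\bigl(f_{n+1}(r_{n+2}x_{n+2})\bigr).
\]
These agree by the ladder identity $f_n\circ g_{n+1}=g_n\circ f_{n+1}$. The fact that $F$ and $G$ take values in the trees themselves, not merely in their hulls, is what makes the retraction harmless on the second application.

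For the absence of coincidences, $F(x)=G(x)$ would force, in coordinate $0$, $f_0(y)=g_0(y)$ with $y=r_1(x_1)\in T_1$. This contradicts the emptiness of the coincidence set of $f_0,g_0$ on $T_1$. The claim that Horn's conjecture fails then follows, since $\ell^2$ is a Banach space.

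The main obstacle is making sure the cited ladder has exactly the form used here: maps $f_n,g_n:T_{n+1}\to T_n$ with exact identities and $\Coin(f_0,g_0)=\emptyset$. Also, since $T_n$ is only an abstract finite tree, I need a planar embedding, which exists for any finite tree. One point to check is whether the conjecture requires $F$ and $G$ to commute and not merely to coincide nowhere; that is satisfied here. The key simplification is realizing that no inverse limit is needed at all: the product of convex hulls, together with the shift structure and the retractions, suffices.
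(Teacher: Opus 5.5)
Your proposal is correct and is essentially the paper's proof. It uses a product of square-summably small compact convex planar sets, retractions onto the embedded trees, and the shift maps $x\mapsto (f_n(r_{n+1}x_{n+1}))_n$ and $x\mapsto (g_n(r_{n+1}x_{n+1}))_n$. Commutation comes from $r_{n+1}$ fixing the values of $f_{n+1}$ and $g_{n+1}$, and non-coincidence is read off at coordinate $0$.

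The only difference is that you use the convex hull of each embedded tree where the paper uses the disk $D_n$ of radius $2^{-n-2}$, which changes nothing. One small point: you need the $n$th factor to lie in a ball of radius $c_n$ about the origin, not merely to have diameter at most $c_n$. That is what guarantees $K\subset\ell^2$ and the uniformly small tails you invoke.
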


As  Horn's conjecture    is stated only with most basic  concepts of functional analysis, it may attract attention of mathematicians of diverse interests. Therefore, to make the  counterexample intelligible to  as wide an audience as possible, we will introduce and explain the concepts we use from continuum theory and retract theory, upon which the counterexample is built. The next section is devoted to this task.  Then in the third section  
we prove Theorem 1.

\section{Preliminaries and terminology}\label{sec:preliminaries}

This section records the terminology used in the proof. We assume the reader to be familiar with basic concepts of functional analysis, such as metric spaces, the Banach spaces etc.   For
continuum theory  we refer to the
standard graduate text \cite{Nadler1992}; and for retract theory we refer to the
foundational monograph \cite{Borsuk1967}.   Unless stated
otherwise, all vector spaces are real, all topological spaces considered below
are metrizable, and every map between topological spaces is assumed to be
continuous.  The notation $\Z_+$ stands for  nonnegative integers.

\subsection{Metric and linear terminology}

\begin{definition}[Connectedness and local connectedness]
A topological space $X$ is {connected} if it cannot be written as
$X=U\cup V$, where $U$ and $V$ are disjoint, nonempty, open subsets of $X$.
It is {locally connected} if for every point $x\in X$, and every open set $U$ containing $x$, there is an open connected set $V$ satisfying $x\in V\subseteq U.$
\end{definition}

\begin{definition}[Convexity]
Let $E$ be a vector space.  A subset $K\subset E$ is {convex} if
\begin{equation*}
 (1-t)x+ty\in K \qquad  \text{whenever}
 \qquad   x,y\in K,\ 0\leq t\leq1.
\end{equation*}
\end{definition}

\begin{definition}[The Hilbert direct sum]
 Let $E_n, \ n \in \Z_+$ be  a family of Hilbert spaces, with inner products $\langle \cdot,\cdot \rangle_n$. 
Then 
\begin{equation*}
	\ell^2(\Z_+;E_n)=\left(\bigoplus_{n=0}^{\infty}E_n\right)_{\ell^2}
	:=
	\left\{x=(x_n)_{n\in \Z_+}: \  x_n\in E_n,\ 
	\sum_{n=0}^{\infty}\lVert x_n\rVert_n^2<\infty\right\}
\end{equation*}
is their Hilbert direct sum, which is  itself a Hilbert space with coordinatewise addition and scalar multiplication, and the inner product
\begin{equation*}
\langle x,y \rangle:=\sum_{n\in \Z_+} \langle x_n,y_n \rangle_n.
\end{equation*}
\end{definition}

\subsection{Maps, coincidence points, exactly commuting ladders}

\begin{definition}[Maps and composition]
A {self-map} of a space $X$ is a map $F:X\to X$.  The identity self-map
is denoted by $\operatorname{id}_X$.  
Two self-maps $F,G:X\to X$ commute if
$
 F\circ G=G\circ F.
$
\end{definition}

\begin{definition}[Coincidence points]
Let $F,G:X\to X$. A point $x$ is a coincidence point of $F$ and $G$ if $F(x)=G(x)$.
 The coincidence set is
\begin{equation*}
	\Coin(F,G):=\{x\in X:F(x)=G(x)\}.
\end{equation*}
\end{definition}

\begin{definition}[Homeomorphisms and embeddings]
A map $h:X\to Y$ is a {homeomorphism} if it is bijective and both $h$
and $h^{-1}$ are continuous.  A map $e:X\to Y$ is a {topological
embedding} if it is a homeomorphism from $X$ onto the subspace $e(X)\subset
Y$.  A space is {planar} if it admits a topological embedding into
$\R^2$.  A continuous injection from a compact space into a Hausdorff space is
a topological embedding, and its image is closed.
\end{definition}

\begin{definition}[Exact commuting ladder]
	An {exact commuting ladder} is a sequence of spaces $T_n, n\in\Z_+$ together with
	two maps at each level,
	$
	f_n,g_n:T_{n+1}\rightarrow T_n,
	$
	such that for each $n\in \Z_+$ we have the commutation relation
	\begin{equation}\label{eq:definition-ladder}
		f_n\circ g_{n+1}=g_n\circ f_{n+1}.
	\end{equation}
	The
	ladder is {coincidence-free at level $n$} if
	$\Coin(f_n,g_n)=\varnothing$.
\end{definition}

\subsection{Continua, arcs, trees, and dendrites}

\begin{definition}[Continuum, arc, and simple closed curve]
A {continuum} is a nonempty compact connected metric space.  An
{arc} is a space homeomorphic to the interval $[0,1]$.  A
{simple closed curve}, also called a topological circle, is a space
homeomorphic to the unit circle $S^1$.
\end{definition}

\begin{definition}[Finite tree]
A {finite tree}, or simply a {tree} in this article, is a continuum
that is the union of finitely many arcs with pairwise finite intersections and
contains no simple closed curve.  Every finite tree is planar, so after a
rescaling it may be embedded in any disk of positive radius.
\end{definition}

\begin{definition}[Dendrite]
A {dendrite} is a locally connected continuum containing no simple
closed curve.  Every finite tree is therefore a dendrite.   Standard definitions and equivalent characterizations are
given in \cite[Chapter X]{Nadler1992}.
\end{definition}

\subsection{Retracts and absolute retracts}

\begin{definition}[Retraction and retract]
Let $A$ be a subspace of a space $X$.  A continuous map
$
 r:X\rightarrow A
$
is a {retraction} if $r(a)=a$ whenever $a\in A.$
 In that case $A$ is called a
retract of $X$.
\end{definition}

\begin{definition}[Absolute retract]
A compact metric space $A$ is an {absolute retract for compact metric
spaces} if, whenever $A$ is embedded as a closed subspace of a compact metric
space $X$, the embedded copy of $A$ is a retract of $X$.  
\end{definition}

\begin{theorem}[Borsuk's dendrite theorem]\label{thm:dendrite-ar}
Every dendrite is an absolute retract for the class of compact metric spaces.
\end{theorem}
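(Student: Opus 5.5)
The plan is to prove the extension form of the statement. Let $A$ be a dendrite, $Y$ a compact metric space, and $X\subset Y$ closed and homeomorphic to $A$. Composing a retraction $Y\to X$ with the homeomorphism, it suffices to show that every map $f:X\to A$ extends to a map $Y\to A$. I will do this in three stages: arcs, finite trees, and finally general dendrites as limits of trees.

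\emph{Stage 1: arcs.} An arc is homeomorphic to $[0,1]$, so this case is exactly the Tietze extension theorem.

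\emph{Stage 2: finite trees.} I induct on the number of arcs, writing $T=T'\cup J$, where $J$ is an arc with $J\cap T'=\{p\}$ and $p$ an endpoint of $J$. Parametrize $J\cong[0,1]$ with $p=0$. Given $f:X\to T$, set
\[
X_J=f^{-1}(J),\qquad X'=f^{-1}(T'),
\]
which are closed sets with $X_J\cap X'=f^{-1}(p)$.
\begin{enumerate}[label=(\roman*)]
\item Let $c:T\to T'$ collapse $J$ to $p$. By the induction hypothesis, $c\circ f$ extends to $F':Y\to T'$.
\item Define a map on $X$ into $J$ by $f$ on $X_J$ and the constant $p$ on $X'$. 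These agree on the overlap, so the map is continuous, and by Tietze it extends to $G:Y\to J$.
\item Since a finite tree is contractible, fix a contraction $H:T'\times[0,1]\to T'$ with $H_1=\mathrm{id}$ and $H_0\equiv p$.
\item Choose a continuous $\lambda:Y\to[-1,1]$ with $\lambda=1$ on $X_J\setminus X'$ and $\lambda=-1$ on $X'\setminus X_J$. To make this continuous near $f^{-1}(p)$, I would build $\lambda$ from $d(\cdot,X_J)-d(\cdot,X')$ and truncate.
\end{enumerate}
Then define
\[
F(y)=\begin{cases}\lambda(y)\,G(y), & \lambda(y)\ge 0,\\[2pt] H_{-\lambda(y)}\bigl(F'(y)\bigr), & \lambda(y)\le 0.\end{cases}
\]
Where $\lambda=0$ both formulas give $p$. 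On $f^{-1}(p)$ we have $G=F'=p$, so $F=p=f$ regardless of $\lambda$. Hence $F$ is a continuous extension of $f$.

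\emph{Stage 3: dendrites.} I would quote from \cite[Chapter X]{Nadler1992} that a dendrite $A$ contains an increasing sequence of finite trees $T_1\subset T_2\subset\cdots$ with the following properties. Each $T_{n+1}$ is $T_n$ with finitely many arcs attached at single points. There are first-point retractions $r_n:A\to T_n$ with $r_n\circ r_{n+1}=r_n$. Finally, $\varepsilon_n:=\sup_{a}d(r_n(a),a)\to0$, and after passing to a subsequence I may assume $\sum_n\varepsilon_n<\infty$. The fibres of $r_n|_{T_{n+1}}$ have diameter at most $\varepsilon_n$.

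The construction then proceeds in two steps:
\begin{enumerate}[label=(\alph*)]
\item By Stage 2, extend $r_1\circ f$ to $R_1:Y\to T_1$.
\item Inductively, construct $R_{n+1}:Y\to T_{n+1}$ extending $r_{n+1}\circ f$ and satisfying $r_n\circ R_{n+1}=R_n$. This is the same gluing as in Stage 2, applied one attached arc at a time. For an arc $J$ attached at $q$, one lifts $R_n$ near $R_n^{-1}(q)$ into $J$ using a Tietze extension of the prescribed data on $X$, damped by a cutoff $\lambda$ so that it equals $q$ wherever $R_n\ne q$.
\end{enumerate}
The relation $r_n\circ R_{n+1}=R_n$ gives $d(R_{n+1},R_n)\le\varepsilon_n$ uniformly. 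So $R_n$ converges uniformly to a continuous $R:Y\to A$, and on $X$ we get $R=\lim_n r_n\circ f=f$.

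The main obstacle is the lifting step (b). The lift must be continuous while equal to $R_n$ off $R_n^{-1}(q)$, which is a closed set, not open. I would handle this by setting the lift equal to $\mu(y)\cdot\tilde G(y)$ in the arc parameter. Here $\tilde G$ is a Tietze extension into $J$ of $r_{n+1}\circ f$ restricted to $f^{-1}(r_{n}^{-1}(q))$, and $\mu$ is a cutoff vanishing wherever $R_n\neq q$. I expect that the continuity of this lift, and its agreement with $r_{n+1}\circ f$ on all of $X$, need to be checked carefully.
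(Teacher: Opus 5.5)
The paper gives no proof of this statement. It quotes it from Borsuk's monograph, so your argument has to stand on its own. Three parts are fine: the reduction to extending maps, Stage 1, and the convergence argument once compatible $R_n$ exist.

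\textbf{Stage 3(b): a genuine gap.} For the $R_n$ your construction produces, the inductive lift need not exist. Suppose $R_{n+1}$ is continuous and $r_n\circ R_{n+1}=R_n$. Then $W=R_{n+1}^{-1}(J\setminus\{q\})$ is an open set on which $R_n\equiv q$. So $R_n$ must be identically $q$ on a neighbourhood of every $x\in X$ with $r_{n+1}f(x)\in J\setminus\{q\}$. The plain Tietze extensions in (a) and (b) give no such control. For example, take $Y=[0,1]$, $X=\{0\}$ and $f(0)\in J\setminus\{q\}$. Nothing prevents $R_n$ from being a path in $T_n$ with $R_n(0)=q$ and $R_n(y)\neq q$ for $y>0$. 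Then $R_{n+1}=R_n$ on $(0,1]$ is forced, and $R_{n+1}(y)\to q\neq R_{n+1}(0)$. Your $\mu\tilde G$ fails for exactly this reason at any $x\in X$ on the frontier of $R_n^{-1}(q)$ with $r_{n+1}f(x)\neq q$.

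Later arcs may be attached anywhere in $r_n(A\setminus T_n)$. So any successful run of the induction requires a stronger hypothesis, which must be built into the induction: $R_n\equiv r_nf(x)$ near each $x\in X\setminus f^{-1}(T_n)$. This is where local connectedness of $A$ really enters: $r_n$ is locally constant on $A\setminus T_n$, because the components of that open set are open.

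Under this hypothesis the lift is easy. Put $Z=R_n^{-1}(q)$, prescribe $q$ on $\partial Z$ and $r_{n+1}f$ on $X\cap Z$, and Tietze-extend over $Z$. The hypothesis makes the two prescriptions agree on the overlap. However, to carry the hypothesis to level $n+1$, the extension must itself be locally constant near $f^{-1}(A\setminus T_{n+1})$. The same is needed for $R_1$. This requires an extension lemma stronger than Tietze, which you have not supplied. One way is to first define the map on suitable pairwise disjoint open neighbourhoods of the sets $f^{-1}(C)$, for $C$ a component of $A\setminus T_{n+1}$, and then check that the resulting domain is closed and the map continuous.

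\textbf{Stage 2(iv): an impossible requirement, but repairable.} Take $Y=X=[-1,1]$ and $f$ a homeomorphism onto the arc $T'\cup J$ with $f(0)=p$. Then $\lambda$ would have to jump from $-1$ to $1$ at $0$. Truncating $d(\cdot,X_J)-d(\cdot,X')$ loses the values $\pm1$ near $f^{-1}(p)$, so $F\neq f$ there.

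One repair keeps your formula. Define $\lambda$ only on $Y\setminus f^{-1}(p)$, where the two sets are disjoint and relatively closed. Choose $H$ with $H_t(p)=p$ for all $t$, and put $F=p$ on $f^{-1}(p)$. Continuity at $f^{-1}(p)$ then follows from $G,F'\to p$ and uniform continuity of $H$.

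A simpler repair splits the domain rather than the range. Let $Y_1=\{d(\cdot,X_J)\le d(\cdot,X')\}$ and $Y_2=\{d(\cdot,X')\le d(\cdot,X_J)\}$. Then $X\cap Y_1=X_J$, $X\cap Y_2=X'$ and $X\cap Y_1\cap Y_2=f^{-1}(p)$. Extend $f$ on $X_J$, together with $p$ on $Y_1\cap Y_2$, into $J$ over $Y_1$. Do the same into $T'$ over $Y_2$ by induction, and glue the two maps.
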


This classical theorem is recorded in
\cite[Corollary 13.5, p.~138]{Borsuk1967}.  Consequently, if a finite tree
$T$ is embedded in a closed disk $D$, its image $A\subset D$ admits a
retraction
$
 r:D\rightarrow A.$
This consequence is the
only retract-theoretic fact needed in our proof.

\section{Proof of Theorem 1}

We now recall the topological input needed to prove Theorem 1 that we borrow from \cite{OversteegenRogers1980,OversteegenRogers1982}. See also \cite{HernandezHoehn2018} for an excellent exposition.

\begin{proposition}[Oversteegen and Rogers]\label{prop:tree-ladder}
We have, for every $n\in \Z_+$,	 a finite  tree $T_n$ and continuous maps
	$
	f_n,g_n:T_{n+1}\rightarrow T_n, 
	$
	such that
	\begin{equation*}
		f_n\circ g_{n+1}=g_n\circ f_{n+1},
	\end{equation*}
	and
	$
	\Coin(f_0,g_0)=\varnothing.
	$
\end{proposition}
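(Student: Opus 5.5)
The plan is to derive the ladder from the Oversteegen--Rogers theorem in its published form \cite{OversteegenRogers1980,OversteegenRogers1982} (see also \cite{HernandezHoehn2018}), rather than to rebuild their construction. That theorem gives a tree-like continuum $X$ with a fixed-point-free self-map. It is realized as an inverse limit $X=\varprojlim(T_n,f_n)$ of finite trees $T_n$ with \emph{surjective} bonding maps $f_n:T_{n+1}\to T_n$. The map is $h:X\to X$, $h((x_n)_n)=(g_n(x_{n+1}))_n$, induced by level maps $g_n:T_{n+1}\to T_n$. The condition that $h$ sends threads to threads is $f_n(g_{n+1}(x_{n+2}))=g_n(x_{n+1})=g_n(f_{n+1}(x_{n+2}))$. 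In their construction this holds exactly on the trees, which is the ladder identity $f_n\circ g_{n+1}=g_n\circ f_{n+1}$. I take this input as given, since it is the deep part of their work.

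It then remains to obtain a level that is coincidence-free on the whole tree, and to reindex so that this level becomes level $0$. The key step is that coincidences propagate downward. Suppose $y\in T_{m+1}$ satisfies $f_m(y)=g_m(y)$, and put $z=f_m(y)\in T_m$. Then
\begin{equation*}
f_{m-1}(z)=f_{m-1}(g_m(y))=g_{m-1}(f_m(y))=g_{m-1}(z),
\end{equation*}
so $z\in\Coin(f_{m-1},g_{m-1})$.

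For a thread $x\in X$ this gives the following. If $f_m(x_{m+1})=g_m(x_{m+1})$, then also $f_{m-1}(x_m)=g_{m-1}(x_m)$, because $x_m=f_m(x_{m+1})$. Define
\begin{equation*}
C_m=\{x\in X: f_m(x_{m+1})=g_m(x_{m+1})\}.
\end{equation*}
These sets are closed, and by the above they are decreasing in $m$. Suppose, for contradiction, that $\Coin(f_m,g_m)\neq\varnothing$ for every $m$. The bonding maps are surjective, so the projection $X\to T_{m+1}$ is onto, and hence each $C_m$ is nonempty. By compactness of $X$, $\bigcap_m C_m\neq\varnothing$. Any $x$ in this intersection satisfies $g_m(x_{m+1})=f_m(x_{m+1})=x_m$ for every $m$, so $h(x)=x$. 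This contradicts the fact that $h$ has no fixed point.

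Hence there is $N$ with $\Coin(f_N,g_N)=\varnothing$. Setting $T'_n=T_{n+N}$, $f'_n=f_{n+N}$ and $g'_n=g_{n+N}$ gives a ladder of finite trees that still satisfies the identities and has $\Coin(f'_0,g'_0)=\varnothing$, which is the statement. The main obstacle is entirely the cited input: producing finite trees with exactly commuting level maps whose induced map is fixed-point free. The reduction above is routine once that input is available, provided the bonding maps are surjective. If they were not, I would first replace each $T_n$ by the image of $X$ in $T_n$, which is a subcontinuum of a tree and hence again a tree, and restrict $f_n$ and $g_n$ accordingly.
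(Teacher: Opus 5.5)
Your argument is correct, but it does something the paper does not. The paper gives no proof of this proposition: it imports it whole, including coincidence-freeness at level $0$, from Oversteegen--Rogers and the exposition of Hern\'andez-Guti\'errez--Hoehn.

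You instead import only the headline form of their result: a fixed-point-free map $h$ of $X=\varprojlim(T_n,f_n)$ induced coordinatewise by maps $g_n$. You then derive a coincidence-free level yourself. The sets $C_m$ are closed, nonempty because the projections are onto, and nested because coincidences propagate downward through the ladder identity. A point of $\bigcap_m C_m$ would be a fixed point of $h$, and a shift of indices moves the coincidence-free level to $0$. All of these steps check out. The fallback of replacing $T_n$ by $\pi_n(X)$ also works, since $g_n(\pi_{n+1}(X))\subseteq\pi_n(X)$ follows from $h(X)\subseteq X$.

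Your route shows the proposition is equivalent to the existence of a fixed-point-free induced map on an inverse limit of trees. The converse is trivial, since a fixed point $x$ gives $f_0(x_1)=x_0=g_0(x_1)$. So the statement no longer depends on how the cited sources index or phrase their coincidence condition. The paper's route buys only brevity.

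One sharpening: once the bonding maps are surjective, you need not assume that the ladder identity holds exactly on the whole trees. Every point of $T_{n+2}$ is the $(n+2)$-th coordinate of some thread $x$, and $h(x)\in X$ then forces $f_n\circ g_{n+1}=g_n\circ f_{n+1}$ at that point. So the only genuine input is that the fixed-point-free map is induced by level maps at all.
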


By \cref{thm:dendrite-ar}, every embedded finite tree is a retract of any compact metric space containing it as a closed subspace.  In particular, an embedded copy of $T_n$ in a closed disk admits a continuous retraction from that disk.  This is the extension mechanism we use. 

\begin{proof}

Take the trees and maps supplied by \cref{prop:tree-ladder}. For every nonnegative integer $n$, choose
$
\rho_n:=2^{-n-2},
\ n\in\Z_+
$
and let $D_n$ be the closed disk of radius $\rho_n$ centered at the origin in $\R^2$.
Since every finite tree is planar, choose a topological embedding
$ e_n:T_n\rightarrow A_n$
where
$
A_n:=e_n(T_n)\subset D_n.
$
We transport the maps to the embedded copies by setting
\begin{equation*}\label{}
	\widetilde{f}_n:=e_n\circ f_n\circ e_{n+1}^{-1},
	\qquad
		\widetilde{g}_n:=e_n\circ g_n\circ e_{n+1}^{-1}.
\end{equation*}
Then
\begin{equation}\label{eq:abstract-no-coincidence}
		\widetilde{f}_n\circ	\widetilde{g}_{n+1}
	=	\widetilde{g}_n\circ	\widetilde{f}_{n+1},
	\qquad \text{and}  \qquad
	\Coin(	\widetilde{f}_0,	\widetilde{g}_0)=\varnothing.
\end{equation}
Because $A_n$ is a dendrite, there is a continuous retraction
$
r_n:D_n\rightarrow A_n.
$
The set and the space  that constitute the counterexample are 
\begin{equation*}
	K=\prod_{n=0}^{\infty}D_n
	\subset \ell^2(\Z_+;\R^2).
\end{equation*}
As a direct sum of 2 dimensional Hilbert spaces,  $\ell^2(\Z_+;\R^2)$ is a separable Hilbert space. As $\rho_n, \ n\in \Z_+$ is square summable, $K$ lies entirely in this space. Convexity of factors immediately  yield convexity of $K$.
We next prove  compactness of $K$ with respect to the norm topology of its ambient space, by proving sequential compactness.  

 Let $\{x^{m}\}_{m\geq1}$ be a sequence in $K$, where
$
 x^{m}=(x_n^{m})_{n\in \Z_+}.
$ 
We note that $x_n^m\in \R^2$, for any $m,n.$  Because each $D_n$ is compact, a diagonal argument gives a subsequence, again denoted by $(x^{m})$, and points $x_n\in D_n$ such that
$
 x_n^{(m)}\rightarrow x_n$
 in $\R^2
$
for every fixed $n$. Put $x=(x_n)_{n\geq0}\in K$. Given $\varepsilon>0$, choose $N$ so large that
\begin{equation}\label{eq:tail-choice}
 \sum_{n>N}\rho_n^2<{\varepsilon^2}/{8}.
\end{equation}
For sufficiently large $m$,
\begin{equation*}
 \sum_{n=0}^{N}\lVert x_n^{(m)}-x_n\rVert_2^2
 <{\varepsilon^2}/{2},
\end{equation*}
 where $\|\cdot\|_2$ is the usual Euclidean distance on $\R^2$.  Moreover,  as $x_n^m,x_n\in D_n$ for any $m,n$, we have
\begin{equation*}
 \sum_{n>N}\lVert x_n^{(m)}-x_n\rVert_n^2
 \leq 4\sum_{n>N}\rho_n^2
 <{\varepsilon^2}/{2}.
\end{equation*}
Thus $x^{(m)}\to x$ in $ \ell^2(\Z_+;\R^2)$. Hence $K$ is sequentially compact and therefore compact.

To introduce our maps $F,G$ we first  define 
\begin{align*}
 \alpha_n&:=	\widetilde{f}_n\circ r_{n+1}:D_{n+1}\rightarrow A_n\subset D_n,\\
 \beta_n&:=	\widetilde{g}_n\circ r_{n+1}:D_{n+1}\rightarrow A_n\subset D_n.
\end{align*}
Since $r_{n+1}$ is the identity on $A_{n+1}$, the ladder identities remain exact. Indeed,
\begin{align}
 \alpha_n\circ\beta_{n+1}
 =	\widetilde{f}_n\circ r_{n+1}\circ 	\widetilde{g}_{n+1}\circ r_{n+2}
 =	\widetilde{f}_n\circ 	\widetilde{g}_{n+1}\circ r_{n+2}
 =	\widetilde{g}_n\circ 	\widetilde{f}_{n+1}\circ r_{n+2}
 =\beta_n\circ\alpha_{n+1}.
 \label{eq:extended-ladder}
\end{align}
Also,
\begin{equation}\label{eq:extended-no-coincidence}
 \alpha_0(z)\neq\beta_0(z)
 \qquad\text{for every }z\in D_1.
\end{equation}
For if equality held, then with $u=r_1(z)\in A_1$ we would have $	\widetilde{f}_0(u)=	\widetilde{f}_0(u)$, contradicting \eqref{eq:abstract-no-coincidence}.
Now we can define $F,G.$  For $x=(x_0,x_1,x_2,\ldots)\in K$, 
\begin{align*}
 F(x):=\bigl(\alpha_0(x_1),\alpha_1(x_2),\alpha_2(x_3),\ldots\bigr),  \qquad  \qquad  G(x):=\bigl(\beta_0(x_1),\beta_1(x_2),\beta_2(x_3),\ldots\bigr).
 \label{eq:G-definition}
\end{align*} 
Since $\alpha_n(x_{n+1}),\beta_n(x_{n+1})\in D_n$, both maps take $K$ into itself. Since $\alpha_0(x_1)\neq \beta_0(x_1),$ we have $F(x)\neq G(x).$ So by their definition $F,G$ never coincide. It remains to verify that  they are continuous and that they commute.

We verify continuity of $F$; the proof for $G$ is identical. Suppose $x^{m}\to x$ in $K$. For every  $n\in \Z_+$, the coordinate maps $\alpha_{n},\beta_n$ are continuous, as they are compositions of continuous maps.
Choose $N$ as in \eqref{eq:tail-choice}. On the first $N+1$ coordinates, convergence is therefore uniform in the finite-sum sense, while on the tail, because the range of $\alpha_{n}$ is $D_n$, one has
\begin{equation*}
 \sum_{n>N}
 \left\lVert
 \alpha_n(x_{n+1}^{m})-\alpha_n(x_{n+1})
 \right\rVert_n^2
 \leq 4\sum_{n>N}\rho_n^2.
\end{equation*}
It follows that $F(x^{(m)})\to F(x)$ in $\ell^2(\Z_+;\R^2)$.

Next we verify that $F$ and $G$ commute.  For every $x\in K$ and every $n\in \Z_+$, \eqref{eq:extended-ladder} gives
\begin{align*}
 (F\circ G)(x)_n
 =\alpha_n\bigl(\beta_{n+1}(x_{n+2})\bigr)
 =\beta_n\bigl(\alpha_{n+1}(x_{n+2})\bigr)
 =(G\circ F)(x)_n.
\end{align*}
Hence $F\circ G=G\circ F$. This concludes the proof.
\end{proof}

\end{document}